\documentclass{article}
\PassOptionsToPackage{gray}{xcolor}
\usepackage[monochrome]{xcolor}
\usepackage{notes}
\usepackage{url}

\let\setminus\smallsetminus

\makeop{HS}
\title{Minimal log discrepancies of determinantal varieties\\via jet schemes}
\author{Devlin Mallory\thanks{The author was supported by NSF Graduate Research Fellowship grant DGE-1256260, as well as partially supported by NSF grant DMS-1701622.}}
\makeop{sing}

\makeop{Cont}
\let\flip\reflectbox
\makeop{mld}
\makeop{lct}

\theoremstyle{theorem}
\newtheorem{thm}{Theorem}
\newtheorem{lem}[thm]{Lemma}
\newtheorem{cor}[thm]{Corollary}
\newtheorem{prop}[thm]{Proposition}
\theoremstyle{definition}
\newtheorem{dfn}[thm]{Definition}
\newtheorem{exa}[thm]{Example}
\newtheorem{rem}[thm]{Remark}
\numberwithin{thm}{section}

\usetikzlibrary{decorations.pathreplacing}
\tikzset{
    ncbar angle/.initial=90,
    ncbar/.style={
        to path=(\tikztostart)
        -- ($(\tikztostart)!#1!\pgfkeysvalueof{/tikz/ncbar angle}:(\tikztotarget)$)
        -- ($(\tikztotarget)!($(\tikztostart)!#1!\pgfkeysvalueof{/tikz/ncbar angle}:(\tikztotarget)$)!\pgfkeysvalueof{/tikz/ncbar angle}:(\tikztostart)$)
        -- (\tikztotarget)
    },
    ncbar/.default=0.5cm,
}
\tikzset{round left paren/.style={ncbar=0.5cm,out=120,in=-120}}
\tikzset{round right paren/.style={ncbar=0.5cm,out=60,in=-60}}

\begin{document}
\maketitle
\abstract{We compute the minimal log discrepancies of determinantal varieties of square matrices, and more generally of pairs $\bigl(D^k,\sum \a_i D^{k_i}\bigr)$ consisting of a determinantal variety (of square matrices) and an $\R$-linear sum of determinantal subvarieties. Our result implies the semicontinuity conjecture for minimal log discrepancies of such pairs. For these computations, we use the description of minimal log discrepancies via codimensions of cylinders in the space of jets; this necessitates the computations of an explicit generator for the canonical differential forms and the Nash ideal of determinantal varieties, which may be of independent interest.}

\section{Introduction}
Let $X$ be a normal $\Q$-Gorenstein complex algebraic variety and $Y=\sum q_i Y_i$ a formal $\R$-linear sum of subvarieties $Y_i\subset X$.
The minimal log discrepancy $\mld(W;X,Y)$ is a measure of the singularities of the pair $(X,Y)$ along a subvariety $W\subset X$, and its behavior, although subtle, is quite important for the minimal model program. 
In particular, minimal log discrepancies were used by Shokurov \cite{Shokurov} 
to study termination of flips;
he showed that
 semicontinuity of $\mld(x;X,Y)$ as $x$ varies over the closed points of $X$, together with the ascending chain conditions on minimal log discrepancies, would imply termination of flips.

Semicontinuity is not known in general, but has been shown in 
the following situations:
\begin{itemize}
\item For varieties of dimension at most 3 and toric varieties of arbitrary dimension \cite{Ambro}.
\item If the ambient variety is smooth or lci \cite{EM,EMY}.
\item If $X$ has only quotient singularities \cite{Nakamura}.
\end{itemize}
The latter two results were both proved using jet schemes, and as far as we know no proofs are known which avoid the use of jet schemes.

In this paper, we use jet schemes to compute minimal log discrepancies on determinantal varieties of square matrices, which fall outside the aforementioned cases (see the beginning of Section~\ref{detrings}).
Let
$D^k\subset \A^{m^2}$ be the locus of $m\times m$-matrices of rank $\leq k$.
We obtain the following description of the minimal log discrepancies of $D^k$:

\begin{thm}
 If $w\in D^k$ is a matrix of rank exactly $q\leq k$,
then
$$
\mld(w;D^k)=
q(m-k)+km.
$$
Moreover, we have
$$
\mld(D^{k-1};D^k) = m-k+1.
$$
\end{thm}

Note that this recovers the fact that $D^k\subset \A^{m^2}$ has terminal singularities for any $k\leq m$.

\begin{rem}
We restrict our attention to the case of square matrices because it is the only setting in which $D^k$ is $\Q$-Gorenstein (see Section~\ref{detrings}).
\end{rem}

More generally, 
we consider pairs of the form $\bigl(D^k,\sum_{i=1}^k \a_i D^{k-i}\bigr)$ for $\a_i\in \R$ (possibly zero).
We compute when these pairs are log canonical, and moreover compute their minimal log discrepancies:
\vadjust{\goodbreak}

\begin{thm}
Consider the pair $\Bigl(D^k,\sum_{i=1}^k \a_i D^{k-i}\Bigr)$ (where the $\a_i$ may be zero). 
\begin{enumerate}
\item 
$\Bigl(D^k,\sum_{i=1}^k \a_i D^{k-i}\Bigr)$ is log canonical at a matrix $x_q$ of rank $q\leq k$ exactly when 
$$
\a_1+\dots+\a_j \leq m-k+(2j-1)
$$
for all $j=1,\dots,k-q$.
\item In this case, 
$$
\mld\biggl(x_q; D^k,\sum_{i=1}^k \a_i D^{k-i}\biggr)= q(m-k)+km-
\sum_{i=1}^{k-q} (k-q-i+1)\,\a_i.
$$
\item 
$\Bigl(D^k,\sum_{i=1}^k \a_i D^{k-i}\Bigr)$ is log canonical along $D^{k-j}$  (for $j>0$) exactly when 
$$
\a_1+\dots+\a_j \leq m-k+(2j-1)
$$
for all $j=1,\dots,k$.
\item In this case, 
$$
\mld\biggl(D^{k-j}; D^k,\sum_{i=1}^k \a_i D^{k-i}\biggr)= 
j(m-k+j)-\sum_{i=1}^j (j-i+1)\a_i
$$
\end{enumerate}
\end{thm}


This immediately implies semicontinuity of the minimal log discrepancy for such pairs (when the coefficients are nonnegative):

\begin{cor}[semicontinuity]
If $\a_1,\dots,\a_k$ are nonnegative real numbers,
the function
$w\mapsto \mld\bigl(w;D^k,\sum_{i=1}^k \a_i D^{k-i}\bigr)$ is lower-semicontinuous on closed points.
\end{cor}

Our work is by no means the first application of jet schemes to the calculation of invariants of determinantal varieties: 
Docampo \cite{Docampo} uses jet schemes to compute the log canonical threshold of  pairs $(\A^{m^2},D^k)$, the irreducible components of the truncated jet schemes $D^k_\ell$, and the topological zeta function of the $D^k$.
Our application of jet schemes to the minimal log discrepancies of the determinantal varieties draws heavily from his methods there.

To calculate these minimal log discrepancies, we use the characterization of \cite{EinMustata} of minimal log discrepancies in terms of codimensions of various ``multicontact'' loci in the space of jets. To apply this characterization we need two main ingredients:
\begin{itemize}
\item Our computation of the Nash ideal of $D^k$ (up to integral closure).
\item 
Our calculation of the codimension of the 
$(\GL_m\times \GL_m)_\infty$-orbits in the jet scheme $(D^k)_\infty$.
\end{itemize}

The decomposition of the jet scheme $(D^k)_\infty$ into orbits of the natural group action of $(\GL_m\times \GL_m)_\infty$ is due to \cite{Docampo}, 
 and our calculation of the codimension of these orbits in $(D^k)_\infty$ is inspired by the methods of his paper.

The paper is organized as follows:
In Section~\ref{arcs} we briefly recall the definitions of jet schemes, as well as the notion of cylinders in the space of jets and their codimensions; we also recall the definition of minimal log discrepancies and their interpretation as codimensions of cylinders in the jet space.
We then review some basic properties of determinantal rings in Section~\ref{detrings}, as well as the straightening law on a determinantal ring. In Section~\ref{nideal} we describe the Nash ideal of a determinantal ring, and in Section~\ref{mainthm} we actually compute minimal log discrepancies and prove the consequences noted above.

\subsection*{Acknowledgements}
I would like thank Karen Smith and Mel Hochster for useful conversations on canonical differential forms and determinantal rings, respectively. 
I am especially grateful to my advisor Mircea Musta\c{t}\u{a} for many crucial conversations and suggestions on this project.
I would also like to thank Robert Walker for very helpful comments on an earlier draft of this paper. 

\section{Jet schemes and discrepancies}
\label{arcs}
We recall some basic definitions and results on jet schemes; for a general treatment of the basic theory see \cite{Vojta}, and for an overview of their application to birational geometry and the study of singularities see \cite{EinMustata}.
Let $K$ be a field and let $X$ be a finite-type $K$-scheme. For each $\ell \in \N$ consider the functor 
$$
T\mapsto \Hom_K(T\times _K \Spec(K[t]/t^{\ell+1}),X)
$$
from $K$-schemes to sets. 
As is well-known, this functor is representable by a $K$-scheme $X_\ell$, the $\ell\text{-th}$ \emph{jet scheme} of $X$. Moreover, each $X_\ell$ is a finite-type $K$-scheme.
A $K$-point $ \Spec K[t]/t^{\ell+1}\to X$ is called an $\ell$-\emph{jet} on $X$.

The 
truncation maps $K[t]/t^{\ell+1}\to K[t]/t^{\ell'+1}$ for $\ell' < \ell$ induce 
morphisms $\psi_{\ell,\ell'}:X_{\ell}\to X_{\ell'}$, which are easily checked to be affine, so we obtain an inverse system $\set{\cdots \to X_{\ell}\to X_{\ell-1}\to\cdots}$ of affine morphisms. We can thus form the inverse limit, which we denote by $X_\infty$ and call the \emph{jet scheme} of $X$ ($X_\infty$ is also called the arc scheme of $X$). In contrast to the $\ell$-jet schemes $X_\ell$, $X_\infty$ is never of finite type over $K$ (unless $X$ is 0-dimensional).

\subsection{Cylinders in the space of jets and their codimension}
Fix an arbitrary finite-type $K$-scheme $X$.

\begin{dfn}
A cylinder $C$ in $X_\infty$ is a set of the form $C=\psi_{\infty,\ell}\inv(S)$ for $S\subset X_\ell$ a constructible subset. 
\end{dfn}

\begin{rem}
Note that cylinders are closed under  finite unions, finite intersections, and complements.
\end{rem}


Let $\mathfrak a \subset\O_X$ be an ideal sheaf. For a $K$-point $\gamma \in J_\infty(X)$, we write $\ord_\g(\mathfrak a)$ for the value
 obtained by pulling back the ideal $\mathfrak a $ along $\gamma:\Spec K[[t]]\to X$ and applying the $t$-adic valuation.

\begin{dfn}
We define the contact loci along $\mathfrak a$ as 
$$
\Cont^{\geq i}(\mathfrak a) =\set{\gamma \in X_\infty: \ord_{\gamma}(\mathfrak a)\geq i }
\quad\text{and}\quad
\Cont^{i}(\mathfrak a) =\set{\gamma \in X_\infty: \ord_{\gamma}(\mathfrak a)= i }.
$$
\end{dfn}

Note that these are cylinders in $X_\infty$: 
we can write
$$\Cont^{\geq i}(\mathfrak a)
=\psi_{\infty,i-1}\inv\bigl(J_{i-1}(V(\mathfrak a))\bigr),
$$
where $J_{i-1}(V(\mathfrak a))\subset J_{i-1}(X)$ is the $(i-1)$-st jet scheme of the subscheme $V(\mathfrak a)$, which is naturally a closed subscheme of $J_{i-1}(X)$.
Since
 $$\Cont^{i}(\mathfrak a)=\Cont^{\geq i}(\mathfrak a) \smallsetminus 
\Cont^{\geq i+1}(\mathfrak a),
$$
it is a cylinder as well.

Given some subvarieties $Y_1,\dots,Y_s$ and some $s$-tuple $\underline w = (w_1,\dots,w_s)\in \N^s$, we write $\Cont^{\underline w}(Y)=\bigcap \Cont^{w_i}(Y_i)$;
we refer to such intersections of contact loci as multicontact loci.

We will need the following lemma on invariance of contact loci under integral closure:

\begin{lem}
\label{intclos}
If $X$ is a finite-type $K$-scheme, $\mathcal J \subset \O_X$ an ideal sheaf, and $\overline {\mathcal J}$ its integral closure, then 
$\Cont^{\geq i}(\mathcal J)=\Cont^{\geq i}(\overline{\mathcal J})$ and
$\Cont^{ i}(\mathcal J)=\Cont^{ i}(\overline{\mathcal J})$. 
\end{lem}

\begin{proof}
Clearly the first claim implies the second, since $\Cont^{i}(\mathcal I) =\Cont^{\geq i}(\mathcal I)\smallsetminus \Cont^{\geq i+1}(\mathcal I)$ for any ideal $I$.
The first claim is local on $X$, so let $X=\Spec R$ and $J\subset R$ be the ideal in question. 

First, note that
given any inclusion of ideals $\mathfrak a\subset \mathfrak b$
we have an inclusion
$$\Cont^{\geq i}(\mathfrak b)\subset \Cont^{\geq i}(\mathfrak a):$$
if $\gamma^*(\mathfrak b)\subset (t^\ell) $ then $\gamma^*(\mathfrak a)\subset (t^\ell)$, so that $\ord_\g(\mathfrak b) \leq \ord_\g (\mathfrak a)$; thus 
$\gamma \in \Cont^{\geq i}(\mathfrak b)$ implies that $\gamma \in \Cont^{\geq i}(\mathfrak a )$.

We thus have the inclusion $\Cont^{\geq i}(\overline I)\subset \Cont^{\geq i}(I)$. For the reverse inclusion, say that $\gamma \in\Cont^{\geq i}(I)\smallsetminus \Cont^{\geq i}(\overline I)$, and write $v(-)=\ord_t \g^*(-)$ for the semivaluation associated to $\gamma$. 
Suppose that there is $f\in \overline I$ such that $v(f)<i\leq v(I)$.
Since $f$ is integral over $I$, we can write
$$
f^N+a_1 f^{N-1}+\dots+a_0=0,
$$
with $a_j \in I^j$. We then have that 
$$
Nv(f) = v(-a_1f^{N-1}-\dots-a_0)\geq \min_j\bigl(v(a_j f^{N-j})\bigr).
$$
Note that $v(a_jf^{N-j})=v(a_j)+(N-j)v(f)$, and $v(a_j)\geq jv(I)$ since $a_j\in I^j$.
Thus, each 
$v(a_jf^{N-j}) \geq jv(I)+(N-j)v(f)$. We then have
$$Nv(f)
\geq
jv(I)+(N-j)v(f)
$$
for some $j$, and thus 
$v(f)\geq v(I)$, a contradiction.
\end{proof}

We now turn to the notion of codimension of a cylinder;
for this,
we specialize to the case where $K$ is a field of characteristic 0, although much of this section can be adapted to any characteristic.
Assume moreover that $X$ is of pure dimension $n$ over $K$.

\begin{dfn}
The Jacobian ideal of $X$, denoted $\Jac_X\subset \O_X$ is the $n$-th Fitting ideal of the K\"ahler differentials $\Omega_{X/K}$.
\end{dfn}

This can be described locally as follows: if $X=\Spec K[x_1,\dots,x_m]/(f_1,\dots,f_r)$, then $\Jac_X$ is generated by the image of the $(m-n)\times (m-n)$-minors of $(\d f_i/\d x_j)$ in $K[x_1,\dots,x_m]/(f_1,\dots,f_r)$.

The contact loci $\Cont^e(\Jac_X)$ along the Jacobian ideal are of particular importance in what follows. Given any cylinder $C$ we will write $C^{(e)}:=C\cap \Cont^{e}(\Jac_X)$.

\begin{dfn}
Let $C$ be a cylinder.
If $C=\psi_{\infty,r}\inv(S) \subset \Cont^e(\Jac_X)$, then we define 
$$
\codim(C):=n(\ell+1)-\dim \psi_{\infty,\ell}(C)
$$
for any $\ell \geq \max(e,r)$.

If $C$ is an arbitrary cylinder in $J_\infty(X)$, we define
$$
\codim(C):=\min_e(\codim(C^{(e)}).
$$
\end{dfn}

\begin{rem}
Some comments on this definition are in order:
\begin{itemize}
\item By definition, we may write any cylinder as $\psi_{\infty,\ell}\inv(S)$ for some $r$ and $S\subset X_\ell$.
\item The codimension is a nonnegative integer.
This is not trivial; for details, see \cite[Section~5]{EinMustata}.
\item The fact that for 
$C=\psi_{\infty,r}\inv(S)\subset \Cont^e(\Jac X)$
the
quantity 
$$
n(\ell+1)-\dim \psi_{\infty,\ell}(C)
$$
is independent of the choice of $\ell \geq\max( e,r )$ follows from the study of the truncation morphisms on the space of jets (see \cite[Theorem~4.1]{EinMustata}).
\item It is clear that $\codim(C_1\cup C_2)=\min(\codim(C_1),\codim(C_2))$.
\item 
When $X$ is smooth, the codimension in the above sense of a cylinder $C$ coincides with its topological codimension.
\end{itemize}
\end{rem}

We introduce the following lemma to facilitate computation of codimensions of spaces of jets without having to calculate $\Jac_X$ or the contact loci along it explicitly:

\begin{lem}
\label{codimlim}
Given any cylinder $C\subset J_\infty(X)$, not necessarily contained in some $\Cont^e(\Jac _X)$, we have 
$$
\codim(C)=n(\ell+1)-\dim \psi_{\infty,\ell}(C)
$$
for $\ell\gg0$.
\end{lem}

Note that this does not give an explicit bound on how large we must take $\ell$; in our applications here, 
the quantity 
$$
n(\ell+1)-\dim \psi_{\infty,\ell}(C)
$$
will be seen to be independent of $\ell$ for $\ell \gg0$ directly.
\vadjust{\goodbreak}

The key ingredient in the proof of the lemma is the fact that $\lim_{e\to \infty} \codim(C^{(e)})=\infty$; for a proof, see
\cite[Proposition~5.11]{EinMustata}.

\begin{proof}
Say $\codim C = c$. Since
$\lim_{e\to \infty} \codim(C^{(e)})=\infty$, there is $m$ such that $\codim C^{(m')} > c$ for all $m'> m$. Write 
$$
C=
\underbrace{(
C^{(0)}\cup
C^{(1)}\cup
\dots\cup C^{(m)}
)}_{C_1}
\cup \underbrace{\biggl(
\bigcup_{i>m} C^{(i)}
\biggr)}_{C_2}.
$$
It is then immediate that $\codim C_2> c$ and $\codim C_1=c=\codim C$.

Since by the usual properties of dimension
$$\dim (\psi_{\infty,\ell}(C_1))=\max_{e=0,\dots,m}(\psi_{\infty,\ell}(C^{(e)})),$$
it is immediate that 
$$\codim(C_1)=\min_{e=0,\dots,m}\bigl(n(\ell+1)- \dim\psi_{\infty,\ell}(C^{(e)})\bigr)= n(\ell+1)-\dim\psi_{\infty,\ell}(C_1)$$
for $\ell\gg0$.
Thus, all we need to show is that for $\ell \gg 0$,
$$
n(\ell+1)-\dim\psi_{\infty,\ell}(C_1) = 
n(\ell+1)-\dim\psi_{\infty,\ell}(C)  ,
$$
 or equivalently that 
$$
\dim\psi_{\infty,\ell}(C_1)  \geq  
\dim\psi_{\infty,\ell}(C_2).
$$

Fix $\ell\gg0$.
We can write
$$
\psi_{\infty,\ell}(C_2) = \bigcup_{i=1}^\infty \psi_{\infty,\ell}(C^{(m+1)}\cup\dots\cup C^{(m+i)}).
$$
Since
the quantity
$\dim\psi_{\infty,\ell}(C_2)$
is finite and bounded (e.g., by $\dim J_\ell(X)$)
we must have $$\dim \psi_{\infty,\ell}\bigl(C^{(m+1)}\cup\dots\cup C^{(m+j)}\bigr) = \dim \psi_{\infty,\ell}(C_2)$$ for some $j$.

Now, if $\dim \psi_{\infty,\ell}(C_2)> \dim \psi_{\infty,\ell}(C_1)$, we would have 
$$
\eqalign{
\max_{i=m+1,\dots,m+j} \dim\psi_{\infty,\ell} (C^{(i)})
&=
\dim \psi_{\infty,\ell}\bigl(C^{(m+1)}\cup\dots\cup C^{(m+j)}\bigr)
\cr&=\dim \psi_{\infty,\ell}(C_2) 
\cr&> \dim \psi_{\infty,\ell}(C_1),
}
$$
and thus we would have some $i>m$ such that 
$$
\codim C^{(i)}=(n+1)\ell-\dim \psi_{\infty,\ell} (C^{(i)}) < (n+1)\ell-
\dim \psi_{\infty,\ell}(C_1) = \codim C = c,
$$
contradicting our earlier choice of $m$.
\end{proof}

\subsection{The Nash ideal}

There is another ideal sheaf defined on a normal Gorenstein variety $X$, similar to but distinct from the Jacobian ideal, which plays an important role in the relation between jet spaces and discrepancies: the Nash ideal.

Recall that on a normal variety $X$ of dimension $d$ the canonical sheaf $\w_X$ can be defined equivalently as either $i_* \w_{X_\sm}^{}$, the pushforward of the canonical bundle on the smooth locus, or as $(\bigwedge ^d \Omega_X)^{**}$, the reflexification of the $d$-th exterior power of the K\"ahler differentials.
A section of $\w_X$ will be called a canonical differential form on $X$.
For more details on these definitions and their equivalence see \cite{Reid} or \cite{Schwede}.
There is then in particular a natural map 
$\bigwedge ^d \Omega_X \to (\bigwedge ^d \Omega_X)^{**}=\w_X$.

\begin{dfn}
Let $X$ be a normal Gorenstein variety of dimension $d$.   
Because $X$ is Gorenstein,
the
image of
the natural morphism
$$
\bigwedge\nolimits ^d \Omega_X\to
\biggl(\bigwedge\nolimits ^d \Omega_X\biggr)^{**}=\w_X
$$
is a coherent subsheaf of the {invertible} sheaf $\w_X$. This image then defines an ideal sheaf of $\O_X$ (obtained by tensoring the image by $\w_X\inv$); this ideal sheaf is called the Nash ideal sheaf of $X$, which we will denote by $J(X)$.
\end{dfn}

Note that the support of the Nash ideal is contained inside $X_{\sing}$.
If $X$ is lci, then $J(X)=\Jac_X$, but in general they differ (see \cite[Section~9.2]{EinMustata} for details on their relation).

\begin{rem}
\label{gradednash}
By \cite[Section~2]{SSU} and the references cited there, if $X=\Spec R$ for $R$ a graded ring, then the morphism
$$
\bigwedge\nolimits ^d \Omega_X\to
\w_X
$$
is homogeneous. 
If $X$ is Gorenstein as well, then we have $\w_X \cong R(a)$ for some uniquely determined $a\in \Z$, and thus the Nash ideal will be homogeneous.
For more on the canonical modules of graded rings, see \cite[Chapter~2.1]{GotoWatanabe}
 \end{rem}

\subsection{Discrepancies and the jet space}

Here we recall briefly the notion of log discrepancy and the minimal log discrepancy. 
Our approach follows that of \cite{EinMustata}, to which we refer for a comprehensive treatment of this material.
For this section, we will take $X$ to be a normal $\Q$-Gorenstein variety over an algebraically closed field of characteristic~0; we let $Y:=\sum_{i=1}^s a_i Y_i$ be a formal $\R$-linear combination of proper closed subschemes $Y_i$.
We refer to $(X,Y)$ as a pair.

\begin{dfn}
Let $\ord_E $ be a divisorial valuation of $k(X)$ with (nonempty) center $c_X(E)$ on $X$. The log discrepancy of $E$ with respect to the pair $(X,Y)$ is
the real number
$$
a_E(X,Y):=1+\ord_E (K_{X'/X}) -\sum a_i\ord_E (Y_i),
$$
where $X'\to X$ is a birational morphism from a normal variety such that the center $c_{X'}(E)$ of $\ord _E$ on $X'$ is a divisor. 
One can check that this is independent of the choice of normal model $X'\to X$.
\end{dfn}

\begin{dfn}
The minimal log discrepancy of 
the pair $(X,Y)$ along a closed subset $W\subset X$, denoted $\mld(W;X,Y)$, is defined to be
$$
\inf_E\set{a_E(X,Y): c_X(E) \subset W},
$$
If we consider a pair $(X,0)$, we will just write $\mld(W;X)$ for $\mld(W;X,0)$.
(If $\dim X=1$ one must make the convention that if $\mld(W;X,Y)<0$ then it is $-\infty$; this is automatic in higher dimension. We will not treat the 1-dimensional case at all in the following, so this issue will not arise.)
\end{dfn}

\begin{dfn}
\label{whatisterm}
If $\mld(W;X,Y)>-\infty$ (and thus $\geq 0$) we say the pair $(X,Y)$ is log canonical along $W$.  
We say $X$  is terminal  if
$a_E(X) >1$ for every exceptional divisor $E$ over $X$; since smooth varieties have terminal singularities, this is equivalent to 
the condition
  $\mld(X_{\sing};X)>1$, where $X_{\sing}$ is the singular locus of $X$.
\end{dfn}

The semicontinuity conjecture for minimal log discrepancies is the following:

\begin{conj}[\cite{Ambro}]
Let $(X,Y)$ be a pair with the coefficients of $Y$ positive. Then 
the function
$$
x\mapsto \mld(x;X,Y)
$$
is lower-semicontinuous on the closed points of $X$.
\end{conj}

Recall that lower-semicontinuity is equivalent to the set of points where $\mld(x;X,Y)>\a$ being open for any $\a$.
The relation between minimal log discrepancies and jet spaces is expressed through the 
following formula of Ein and Musta\c{t}\u{a}:

\begin{thm}[{\cite[Theorem~7.4]{EinMustata}}]
\label{EMthm}
Let $(X,Y)$ be a pair, with $X$ normal Gorenstein, $Y=\sum \a_i Y_i$, and $W\subset X$  a proper closed subset. Then
$$
\mld(W;X,Y)=\inf_{n,\underline w=(w_i)} \Bigl\{\codim\bigl(\Cont^{\underline w}(Y)\cap \Cont ^n(J(X))\cap \Cont^{\geq 1}(W)\bigr)-n-\sum_i \a_i w_i\Bigr\}.
$$
\end{thm}


\section{Determinantal rings}
\label{detrings}
In this section we work over a field $K$ of arbitrary characteristic.
Let $X=(x_{ij})$ be an $m\times n$ matrix of indeterminates, and let $R:=K[x_{ij}]$ be the polynomial ring on these indeterminates. For $k=1,\dots,\min(m,n)$ we define  the $k$-th determinantal ideal $I_k$ to be the ideal generated by all $k\times k$ minors of $(x_{ij})$.  We write $R_{k} = R/I_{k+1}$ for the corresponding quotient ring (note the difference in index here), so that $R_k$ is the coordinate ring of the $m\times n$ matrices of rank $\leq k$; we write $D^k$ for $\Spec R_k$.
In what follows we will assume $k>0$, since $D^0$ is just a point.

We record here some of the known properties of $R_k$:
\begin{itemize}
\item $I_k$ is a prime ideal, so $R_k$ is a domain.
\item $R_k$ has dimension $k(m+n-k)$, and thus $I_{k+1}$ has codimension $mn-k(m+n-k)$.
\item \cite{Hochster} $R_k$ is normal and Cohen--Macaulay.
\item \cite[Section~8]{Bruns}  $R_k$ is Gorenstein if and only if either $m=n$ or $k=\min(m,n)$;
 $R_k$ is $\Q$-Gorenstein if and only if it is Gorenstein.
\item 
\label{lci}
 $R_k$ is lci only when $k=0$ or $k=\min(m,n)$: 
this follows easily by comparing the
codimension of $I_{k+1}$
and the
 number of $(k+1)\times (k+1)$ minors (which are homogeneous and thus by linear independence form a minimal generating set for $I_{k+1}$).
\item The singular locus of $\Spec R_k$ is $V(I_k)$.
\end{itemize}

Since the (usual) notions of log discrepancies are specific to the $\Q$-Gorenstein case, after this section we will assume that $m=n$, i.e., we work with square matrices only.


\subsection{The straightening law and an elementary consequence}

We recall the straightening law on $R=K[x_{ij}]$ and $R_k=K[x_{ij}]/I_{k+1}$ from \cite{Eisenbud}, and then use it to prove an elementary proposition we will make use of later.
This material will be used only for the calculation of the Nash ideal in Section~\ref{Nash}.

\begin{dfn}
A \emph{Young diagram} $\sigma$ corresponds to a nonincreasing sequence of integers $(\sigma_1,\dots,\sigma_t)$, and should be visualized as a set of left-justified rows of boxes of lengths $\sigma_1,\sigma_2,\dots$. 
We consider only Young diagrams with $\sigma _1 \leq m$. 
A \emph{Young tableaux} $T$ is a filling of a Young diagram $\sigma$ with the integers $\set{1,\dots,m}$. We write $|T|=\sigma$ to indicate the underlying diagram has shape $\sigma$. 
The filling is \emph{standard} if the filling is nondecreasing column-wise and strictly increasing row-wise.
The \emph{content} of a tableaux $T$ is the function $\set{1,\dots,m}\to \N$ taking a number $n$ to the number of times $n$ appears in $T$.
A \emph{double tableaux} $(S|T)$ is a pair of Young tableaux with $|S|=|T|$; we say $(S|T)$ is standard if and only if $S$ and $T$ are both standard.

We partially order Young diagrams via the \emph{dominance order}:
$
\sigma \leq \tau 
$
if and only if 
$$
\sum_{i=1}^j \sigma_i
\leq
\sum_{i=1}^j \tau_i
$$
for all $j$.
\vadjust{\goodbreak}

We partially order Young tableaux as follows:
given tableaux $T,T'$ we say $T\leq T'$ when 
for any $p,q$
the first $p$ rows of $T$ contain fewer integers $\leq q$ than the first $p$ rows of $T'$.
By \cite[Lemma~1.5]{Eisenbud}, this refines the ordering on Young diagrams.
We partially order the double tableaux by saying that $(S|T)\leq (S'|T')$ when $S\leq T $ and $S'\leq T'$.
\end{dfn}

To a double tableaux $(S|T)$ with the rows of $S$ and $T$ having no repeated entries, we can associate a monomial in the minors of $(x_{ij})$ as follows: for each row of $S$ and $T$, say of length $e$, we view the entries in that row as the row and column indices specifying an $e\times e$ minor of $(x_{ij})$. We then multiply the resulting minor from each row to obtain a monomial in the minors, which we will write $x_{(S|T)}$ (this notation is nonstandard).
When we write $x_{(S|T)}$, we will implicitly assume that $S$ and $T$ have no repeated entries in any row.
We will refer to $x_{(S|T)}$ as a double tableaux, but note that the same monomial can arise from different double tableaux (i.e., any permutation of the rows gives the same monomial).

\begin{exa}
Say $m=3$. The double tableaux
$$
(S|T)=
\reflectbox{$\Young(\flip 2,\flip 1,\flip 3|\flip 2,\flip 3|\flip 1)$}
\Young(1,2,3|1,2|2)
$$
corresponds to the monomial
$$
x_{(S|T)}=
 \det\biggl(
\begin{smallmatrix}
x_{11} &x_{12} & x_{13}\\
x_{21} &x_{22} & x_{23}\\
x_{31} &x_{32} & x_{33}
\end{smallmatrix}
\biggr)
\cdot (x_{21}x_{32}-x_{22}x_{31})\cdot x_{12}.
$$
\end{exa}

We will make use of the following \emph{straightening law}; for context and a proof see \cite[Section~2]{Eisenbud}:

\begin{thm}[straightening law]
If $x_{(S|T)}$ is a double tableaux we can write
$$
x_{(S|T)}=\sum n_i x_{(S_i|T_i)}
$$
with each $(S_i|T_i)$ standard, $n_i\in \Z$, $S_i\geq S$, $T_i\geq T$, and with the content of each $(S_i|T_i)$ equal to that of $(S|T)$.
Moreover, the double standard tableaux form a free $K$-basis for $R=K[x_{ij}]$.
\end{thm}


It is then a standard corollary (see, e.g., \cite[Proposition~1.0.2]{Baetica}) 
that $R_k$ also has a straightening law, induced by the one on $R$.
We will abuse notation and write 
$x_{(S|T)}$ for the image in $R_k$ of the monomial $x_{(S|T)}\in R$; note that 
given a nonzero monomial
$ x_{(S|T)}\in R$, we have $x_{(S|T)}\neq 0$ in $R_k$ exactly when no row of $|S|=|T|$ is of length $>k$.
We say the image of  $x_{(S|T)}$ in $R_k$ is standard if $(S|T)$ is.


\begin{cor}
If $x_{(S|T)}$ is a nonzero double tableaux in $R_k$ (so no row of $|S|=|T|$ has length $>j$) we can write
$$
x_{(S|T)}=\sum n_i x_{(S_i|T_i)}
$$
with each $(S_i|T_i)$ standard,
$n_i\in \Z$, $S_i\geq S$, $T_i\geq T$, and with the content of each $(S_i|T_i)$ equal to that of $(S|T)$, and with no row of any $|S_i|=|T_i|$ of length $>k$.
Moreover, the double standard tableaux 
with no row of length $>k$
 form a free $k$-basis for $R=K[x_{ij}]$.
\end{cor}

We now establish an elementary consequence of the straightening law on $R_k$, which we will need for our calculation of the Nash ideal in Section~\ref{nideal}.
We write $S_k\subset R_k$ for the $K$-subalgebra generated by images of the $k\times k$ minors, and give $S_k$ the grading induced by $R_k$ (so $S_k$ is generated in degree $k$).
Let $\Delta\in S_k\subset R_k$ be the image of the $k\times k$ minor arising as the determinant of the first $k$ rows and first $k$ columns.

\begin{prop}
\label{subalg}
If $F$ is a homogeneous element of $R_k$ 
with $\Delta\cdot F  \in S_k$, then $F\in S_k$.
\end{prop}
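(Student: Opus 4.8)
The plan is to realise $S_k$ as a ring of invariants and then exploit that $\Delta$ is simultaneously invariant and a nonzerodivisor. Let $G$ be the reductive group acting on $R_k$ for which the $k\times k$ minors are invariant; in the basic case $R_k=K[X]$ with $X$ a generic $k\times n$ matrix this is $\mathrm{SL}_k$ acting by $X\mapsto gX$, under which every maximal minor, and in particular $\Delta$, is fixed. Then $S_k\subseteq R_k^{G}$ trivially, and the essential input is the reverse inclusion, i.e. the first-fundamental-theorem identity $S_k=R_k^{G}$ (I expect this is recalled earlier or invoked as classical; it amounts to the invariant ring $R_k^{G}$ being the homogeneous coordinate ring of the Grassmannian, and holds over an arbitrary field). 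Granting it, the proposition is formal: $R_k$ is a domain, so $\Delta$ is a nonzerodivisor, and for each $g\in G$ one has $\Delta\cdot(g\cdot F)=(g\cdot\Delta)(g\cdot F)=g\cdot(\Delta F)=\Delta F$, the last equality because $\Delta F\in S_k=R_k^{G}$ is $G$-fixed; cancelling $\Delta$ gives $g\cdot F=F$ for every $g$, hence $F\in R_k^{G}=S_k$. (This argument does not in fact use homogeneity of $F$; the hypothesis only records that, since $S_k$ is generated in degree $k$ and $\Delta$ has degree $k$, the relevant degrees are multiples of $k$.)

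Thus the whole weight of the proof rests on the identification $S_k=R_k^{G}$: that is the step I expect to be the main obstacle, and if it is not available in exactly this form it is what I would establish first. One genuine subtlety arises if the $k\times k$ minors in question are not maximal, since then $S_k$ is not literally an invariant ring; in that case I would reduce to the maximal situation through a factorisation $X=YZ$ with $Y,Z$ generic of inner size $k$, under which each $k\times k$ minor of $X$ becomes a maximal minor of $Y$ times a maximal minor of $Z$, run the same cancellation argument for $\mathrm{SL}_k$ acting on $(Y,Z)$, and transfer the conclusion back to $R_k$; controlling that transfer is then the delicate point.

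A purely ring-theoretic alternative avoids group actions by localising at $\Delta$. Reducing the first $k$ rows and columns by the invertible $\Delta$-block identifies $R_k[\Delta^{-1}]$ with a polynomial ring over $K[\Delta^{\pm 1}]$ in suitable "reduced" coordinates $y_{ij}$, and $S_k[\Delta^{-1}]$ with the subring generated over $K[\Delta^{\pm 1}]$ by the reduced minors. Since $\Delta$ is a nonzerodivisor, $F=\Delta F/\Delta$ lies in $R_k\cap S_k[\Delta^{-1}]$; homogeneity of degree $dk$ then forces $F=\Delta^{d}P(y)$ for a unique polynomial $P$, and one must show that $F\in R_k$ (no pole along $\Delta=0$) together with $\Delta^{d+1}P(y)\in S_k$ forces $P$ into the algebra generated by the reduced minors — equivalently, that $S_k=R_k\cap S_k[\Delta^{-1}]$. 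That identification is the crux on this route; it can be attacked degree by degree using the straightening law and standard bitableaux, with the easy case being $\deg P\le d$ (one merely pads a product of reduced minors with the trivial degree-zero reduced minor) and all the difficulty concentrated in $\deg P>d$.
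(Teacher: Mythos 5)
Your invariant-theoretic route is genuinely different from the paper's, which works entirely inside the standard-monomial (straightening) basis: the paper shows that $S_k$ is precisely the $K$-span of standard bitableaux of rectangular shape $(k,\dots,k)$, observes that multiplying any standard bitableau of $R_k$ by $\Delta$ again yields a standard bitableau, and then reads off the shapes of $F$'s terms from the (forced rectangular) shapes of $G$'s terms. When your argument applies it is cleaner and more conceptual: cancel the invariant nonzerodivisor $\Delta$ from $\Delta\cdot(g\cdot F)=g\cdot(\Delta F)=\Delta F$ to conclude $g\cdot F=F$. In the Grassmannian case ($R_k=K[X]$ for a generic $k\times n$ matrix $X$, $S_k$ the Plücker subalgebra $=K[X]^{\mathrm{SL}_k}$) this is correct, with the standard caveat that over a finite $K$ the invariants and the cancellation step should be read scheme-theoretically for the first fundamental theorem to apply.

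However, the paper's setting clearly includes the general determinantal ring $R_k=K[X]/I_{k+1}(X)$ (the lemma's proof appeals to the vanishing of $(k+1)$-rowed bitableaux in $R_k$), and there your reduction via $X=YZ$ does not merely face a ``delicate transfer'' --- it has nothing to bite on. Under the action $Y\mapsto Yg^{-1}$, $Z\mapsto gZ$ of $\mathrm{SL}_k$, the entries of $YZ$ are already invariant (indeed $\mathrm{GL}_k$-invariant), so $R_k\cong K[YZ]\subseteq K[Y,Z]^{\mathrm{SL}_k}$ and the cancellation argument only re-derives that $F$ is $\mathrm{SL}_k$-invariant, which was automatic. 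Moreover $K[Y,Z]^{\mathrm{SL}_k}$ is generated by the entries of $YZ$ together with the maximal minors of $Y$ and of $Z$ separately, hence contains all of $R_k$ and is strictly larger than $S_k$; adjoining a torus does not help, since any character killing the $[S]_Y$ alone still leaves every entry of $YZ$ invariant. In short, $S_k$ is not the invariant ring of any linear group acting on $R_k$ or on $K[Y,Z]$, so the cancellation trick cannot isolate it. Your second, localisation route reduces the claim to $S_k=R_k\cap S_k[\Delta^{-1}]$, but that is essentially a restatement of the proposition, and the degree-by-degree standard-bitableau analysis you gesture at to prove it is exactly the content of the paper's lemma and key observation --- which is why the combinatorial route is the one that works uniformly here.
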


We'll set $G:=\Delta \cdot F$. Since $G\in S_k$, we have that $k\mid \deg G$. Say $\deg G = k(d_0+1)$ for some $d_0$; note that $\deg F =kd_0$ then.

We prove the following lemma first: 

\begin{lem}
Let $G\in S_k$ be of degree $k(d_0+1)$.
If we expand $G$
in the standard basis on $R_k$,
say
$G=\sum \l_i x_{(S_i|T_i)} $, then each $(S_i|T_i) $ has shape $(k,\dots,k)$ (with $d_0+1$ entries).
\end{lem}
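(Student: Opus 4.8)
\emph{Reduction to a product of minors.} The plan is to run standard monomial theory. Since $S_k$ is generated as a $K$-algebra by the $k\times k$ minors, all of degree $k$, its degree-$k(d_0+1)$ graded component is spanned over $K$ by products $\delta_1\cdots\delta_{d_0+1}$ of exactly $d_0+1$ many $k$-minors. As $G\in S_k$ is homogeneous of degree $k(d_0+1)$, we may write $G=\sum_j c_j\,\delta^{(j)}_1\cdots\delta^{(j)}_{d_0+1}$ with $c_j\in K$, so it suffices to prove the claim for a single product $P=\delta_1\cdots\delta_{d_0+1}$ of $d_0+1$ many $k$-minors.

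\emph{Straightening.} View $P$ as the bideterminant of a (generally non-standard) bitableau of rectangular shape $\lambda=(k,\dots,k)$ with $d_0+1$ rows. By the straightening law, $P$ equals a $\mathbb{Z}$-linear combination of standard bitableaux, each of some shape $\mu$ with $\mu\trianglerighteq\lambda$ in the dominance order on partitions of $N:=k(d_0+1)$; that is, straightening never decreases the shape. Passing to $R_k$, any standard bitableau with a part $\ge k+1$ vanishes, so the shape $\mu$ of each surviving term has $\mu_1\le k$. Combined with $\mu\trianglerighteq(k,\dots,k)$ this forces $\mu=(k,\dots,k)$ with $d_0+1$ parts: the first partial-sum inequality gives $\mu_1\ge k$, hence $\mu_1=k$; then $\mu_1+\mu_2\ge 2k$ gives $\mu_2\ge k$, hence $\mu_2=k$ since $\mu_2\le\mu_1$; iterating gives $\mu_i=k$ for $i\le d_0+1$, and $|\mu|=N$ leaves no further parts. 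Hence in $R_k$ the element $P$ is a $\mathbb{Z}$-combination of standard bitableaux of shape $(k,\dots,k)$ with $d_0+1$ parts.

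\emph{Conclusion.} Summing over $j$, $G$ is a $K$-linear combination of standard bitableaux all of shape $(k,\dots,k)$ with $d_0+1$ parts. Since the standard bitableaux form a $K$-basis of $R_k$, this is the unique expansion of $G$ in that basis, so in $G=\sum_i\l_i\,x_{(S_i|T_i)}$ every $(S_i|T_i)$ has shape $(k,\dots,k)$ with $d_0+1$ parts. The only substantive ingredient is that the straightening algorithm interacts with the dominance order as stated — a non-standard bitableau straightens into standard ones of dominance-larger shape — together with the vanishing of over-large minors in $R_k$; both are classical, so the main thing to pin down is the orientation of the dominance order and the shape conventions under which the straightening law is invoked.
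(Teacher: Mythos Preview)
Your proof is correct and follows essentially the same route as the paper: reduce to a single product of $k$-minors, apply the straightening law, and use dominance together with the vanishing in $R_k$ of bitableaux with a row of length $>k$ to force the shape to be the rectangle $(k,\dots,k)$. The paper's version is terser---it simply notes that a shape dominating $(k,\dots,k)$ with all parts $\le k$ must equal $(k,\dots,k)$---whereas you spell out the partial-sum induction, but the argument is the same.
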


\begin{proof}
By assumption,
$G\in S_k$ is a $K$-linear sum of monomials of shape 
$$\underbrace{(k,k,\dots,k)}_{d_0+1},$$
that is, corresponding to (double) Young diagrams of shape
$$
\underbrace{\left.\Young(,,,,|,,,,|,,,,|,,,,)\right\}\!\!\!}_{k} \ \  d_0+1
$$
It thus suffices to show the result for such monomials. The only issue is that they may not be \emph{standard} monomials. If some monomial $x_{(S|T)}$ is not standard, we apply the straightening law (in $R_k$) to write
$$
x_{(S|T)}=
\sum \pm 
x_{(S_j|T_j)},
$$
with $(S_j|T_j)\geq (S|T)$ having the same content (and thus the same degree). Let $\sigma=|S|$, $\sigma_j=|S_j|$. Note that for $\sigma_j$ to dominate $\sigma$, it would have to have at least $k$ entries in each row; however, if it had $k+1$ entries in any row it would be zero in $R_k$, and thus we must instead have $\sigma_j=\sigma$.
\end{proof}

\begin{proof}[Proof of Proposition~\ref{subalg}]
Expand $F$ in the basis of standard monomials, say $F=\sum \l_i\, x_{(U_i|V_i)}$ with $\mu_i\in K$, $x_{(U_i|V_i)}$ standard of degree $k$ with no row of any $|V_i|$ of length $>k$.
The key observation is that
each product of monomials
$$
\Delta\cdot x_{(U_i|V_i)}
$$
occurring in $\Delta \cdot F$
will again be standard.
We take the standard-basis expansion of $G $, say $G=\sum \mu_i\, x_{(U_i|V_i)}
$, as well, obtaining
$$
\sum \lambda_i\, 
\Delta 
\cdot
x_{(U_i|V_i)}
=
\Delta\cdot F=
G
=
\sum \mu_i \,x_{(S_i|T_i)}
.
$$
Since by our preceding lemma the right side has all monomial terms of shape $|S_i|=(k,\dots,k)$, the same must be true for the left side as well, i.e., each 
$
\Delta 
\cdot
x_{(U_i|V_i)}
$
is of shape $(k,\dots,k)$ (with $d_0+1$ entries). But this implies immediately that $ x_{(U_i|V_i)}$ is of shape $(k,\dots,k)$ (with $d_0$ entries) as well, and thus $F$ is a degree-$d_0$ monomial in the $k\times k$ minors.
\end{proof}


\subsection{$(\GL_m\times \GL_m)_\infty$-orbits action on the jet spaces $D^k_\infty$}
We briefly recall here from \cite{Docampo} the induced action of $\GL_m\times \GL_m$ on the jet spaces of determinantal varieties.
For now, we specialize to the case where $\Char K=0$. 
One can think of jets on $\A^{m^2}$ as $m\times m$-matrices of power series, and jets on $D^k$ as $m\times m$-matrices of power series whose $(k+1)\times (k+1)$ minors are zero.

For each $k$,
$G=\GL_m\times \GL_m$ acts on $D^k$,
so
there is an induced action of $G_\infty$ on $D^k_\infty$ and $G_\ell$ on $D^k_\ell$ for all $\ell=1,\dots,\infty$.
We need one notion before we continue:

\begin{dfn}
An extended partition $\l=(\l_1,\dots,\l_m)$ of length $m$ is a nonincreasing $m$-tuple of elements of $\N\cup\set\infty$.
\end{dfn}

The following gives an explicit description of the  $G_\infty$-orbits of $D^m_\infty$, and of those which lie in $D^k_\infty$:

\begin{thm}[{\cite[Proposition~3.2]{Docampo}}]
$G_\infty$-orbits in $D^m_\infty$ are in bijective correspondence with extended partitions of length $m$, under the correspondence sending $\l=(\l_1,\dots,\l_m)$ to
the $G_\infty$-orbit $C_\l$ of 
the jet corresponding to the diagonal matrix
$$
\delta_\l:=
\begin{pmatrix}
t^{\l_1} & & &\\
&t^{\l_2}& & &\\
&&\ddots\\
&&&t^{\l_m}
\end{pmatrix}.
$$
An orbit
$C_\l$ is contained in $D^k_\infty$ if and only if $\l_1=\dots=\l_{m-k}=\infty$, and has finite codimension in $D^k_\infty$
 if and only if $\l_{m-k+1}<\infty$.
More generally, $\ord_{\delta_\l}(I_k)=\l_{m-k+1}+\dots + \l_m$.
\end{thm}

\begin{rem}
\label{truncate}
For any $\ell\in \N$ and any extended partition $\l=(\l_1,\dots,\l_m)$ we write $\bar \lambda_\ell= (\bar \lambda_{1,\ell},\dots,\bar\lambda_{m,\ell})$ for the partition defined by $\bar \lambda _{i,\ell} = \min(\ell,\lambda_i)$.
We write $\delta_{\bar \lambda,\ell}$ for 
the $\ell$-jet corresponding to the matrix
$$
\begin{pmatrix}
t^{\bar \l_1}  & &\\
&\ddots\\
&&t^{\bar \l_m}
\end{pmatrix}
$$
and $C_{\bar \lambda,\ell}$ for its orbit under the natural $(\GL_m\times \GL_m)_\ell$-action. Note that compatibility of the truncation maps $\psi_{\infty,\ell}$ with the group action implies that $\psi_{\infty,\ell}(C_\l)=C_{\bar \l,\ell}$.
\end{rem}


\section{The Nash ideal of a determinantal ring}
\label{nideal}
\label{Nash}
For this section, there is no restriction on $\Char K$.
To apply Theorem~\ref{EMthm} to the determinantal variety $D^k$ we need to know $J(D^k)$, its Nash ideal; actually, by Lemma~\ref{intclos} it suffices to know $J(D^k)$ only up to integral closure. In this section, we show the following:

\begin{thm}
\label{nashideal}
$J(D^k)$ has the same integral closure in $R_k$ as $I_k^{m-k}$.
\end{thm}

In fact, we suspect that the equality $J(D^k)=I_k^{m-k}$ holds: we show below that $J(D^k)\subset I_k^{m-k}$, and the need to pass to integral closures would be avoided if one can show that this is an equality.
It might be possible to prove this combinatorially by extending our approach below.

We begin by analyzing the  relations on $\Omega_{D^k}$:

\begin{prop}
\label{jacobian}
If $\Delta=\Delta_{A,B}$ is a $(k+1)\times (k+1)$ minor, corresponding to a set $A$ of $k+1$ rows and a set $B$ of $k+1$ columns, then the image of $\Delta$ under the map
$$
d: k[x_{ij}] \to \Omega_{\A^{m^2}}
$$
is 
$$
\sum_{(i,j)\in A\times B}  \sgn(i,j)\cdot\Delta_{A\setminus \set{i},B\setminus \set{j}}\, dx_{ij},
$$
where $\sgn(i,j)$ is 1 if the entry $(i,j)$ lies on the first, third, etc.\ antidiagonal of the submatrix formed by the entries in the rows $A$ and columns $B$, and is $-1$ if it lies on the second, fourth, etc.\ antidiagonal.
\end{prop}

\begin{proof}
Without loss of generality we may assume $A=B=\set{1,\dots,k+1}$, so 
$$
\Delta = \det
\begin{pmatrix}
x_{1,1} & \cdots & x_{1,k+1}\\
\vdots & \ddots & \vdots\\
x_{k+1,1} & \cdots & x_{k+1,k+1}
\end{pmatrix}.
$$
If we take the cofactor expansion along the top row, we get
$$
\Delta = 
x_{1,1} \Delta_{[2,\dots,k+1|2,\dots,k+1]} 
- x_{1,2} \Delta_{[2,\dots,k+1|1,3,\dots,k+1}
+\dots
+ (-1)^{k+1} x_{1,k+1} \Delta_{[2,\dots,k+1|1,\dots,k]},
$$
where we write $\Delta_{[i_1,\dots,i_{k}|j_1,\dots,j_{k}]}$ for the minor corresponding to rows $i_1,\dots,i_k$ and columns $j_1,\dots,j_k$.
Now, applying $d$, we see that we get
$$
\displaylines{
d\Delta=
dx_{1,1} \cdot \Delta_{[2,\dots,k+1|2,\dots,k+1]} 
+\dots
+ (-1)^{k+1} dx_{1,k+1} \cdot \Delta_{[2,\dots,k+1|1,\dots,k]}.
\cr\hfill
+x_{1,1} \cdot d\Delta_{[2,\dots,k+1|2,\dots,k+1]}  -\dots + 
(-1)^{k+1} x_{1,k+1}\cdot d \Delta_{[2,\dots,k+1|1,\dots,k]}.
}
$$
Note that none of the $k\times k$ minors appearing on the right side of the above formula involve $x_{1,1}$, so the only term where $dx_{1,1}$ can appear is in
the term
$$
dx_{1,1} \cdot \Delta_{[2,\dots,k+1|2,\dots,k+1]} .
$$
The same reasoning applies to the other $dx_{1,j}$, which then have coefficients
$$
(-1)^{j+1}
\Delta_{[2,\dots,k+1|1,\dots,j-1,j+1,\dots,k+1]}.
$$
Moreover, our choice of the top row to expand upon was arbitrary; repeating the same analysis for another row, we find the desired expression for the coefficients of the $dx_{ij}$.
\end{proof}

The smooth locus of $D^k$ is covered by the open sets $D(\Delta_{IJ})$ for $\Delta_{IJ}$ a $k\times k$ minor. In fact, if we invert $\Delta_{IJ}$, we can use the cofactor expansion of a $(k+1)\times(k+1)$ minor involving $\Delta_{IJ}$ to eliminate the variables not occurring in the same row or column of $\Delta_{IJ}$, obtaining that $D(\Delta_{IJ})\cong \A^{k(2m-k)}$; thus certainly each $D(\Delta_{IJ})$ is contained in the smooth locus. Conversely, it is well-known that $D^k_{\sing} = D^{k-1}=V(I_k)$ (see e.g., \cite[Theorem~6.10]{Bruns}).
We write $\mathcal S_{IJ}$ for the set $\set{x_{ij}: i\in I \text{ or } j\in J}$ of the $k(2m-k)$ variables occurring in the same row or column as $\Delta_{IJ}$. 
The variables occurring in the gray region in the following diagram are exactly those contained in $\mathcal S_{IJ}$
(where the darker region denotes the minor $\Delta_{IJ}$ itself):

$$
\left(\,
\begin{tikzpicture}[,baseline=-6mm]
\draw[thick] (0,0) -- (4,0);
\draw[thick] (1.2,1) -- (1.2,-2);
\node at (.6,.5){$\Delta_{IJ}$};
\draw[fill,color=gray,opacity=.2] (0,1) -- (4,1) -- (4,0) -- (1.2,0) -- (1.2,-2) -- (0,-2) -- (0,1);
\draw[fill,color=gray,opacity=.3] (0,1) -- (1.2,1) --   (1.2,0) -- (0,0) -- (0,1);
\node[text width=2.4cm] at (2.65,-1.1){eliminate these variables};
\end{tikzpicture}
\,
\right)
$$

Thus, the variables in $\mathcal S_{IJ}$ give coordinates on $D(\Delta_{IJ})\cong \A^{k(2m-k)}$, and thus on each $D(\Delta_{IJ})$ we have that 
$$
\Bigl(\bigwedge\nolimits^{k(2m-k)} \Omega_{D^k}\Bigr)\res{D(\Delta_{IJ})}\cong \O_{D(\Delta_{IJ})} \cdot \biggl\< \bigwedge_{x_{ij} \in \mathcal S_{IJ}} dx_{ij}\biggr\>.
$$
(When we write the exterior product over some set of variables, if we do not specify we will implicitly mean that we consider the variables in lexicographic ordering on $\set{1,\dots,m}\times \set{1,\dots,m}$, i.e., from left to right over those appearing in the first row, then in the second, and so on.)

Thus, to give a $k(2m-k)$-form on the smooth locus of $D^k$ (that is, a global canonical differential form), it suffices to define it on each $D(\Delta_{IJ})$ and demonstrate the compatibility of these definitions:

\begin{prop}
\label{canonicalform}
The rational 
$k(2m-k)$-form defined on $D(\Delta_{[1,\dots,k|1,\dots,k]})$ by 
$$
\frac{1}{\Delta_{[1,\dots,k|1,\dots,k]}^{m-k}} 
\bigwedge_{x_{ij} \in \mathcal S_{[1,\dots,k|1,\dots,k]}} dx_{ij}
$$
 extends to a global canonical differential form $w\in H^0(D^k,\w_{D^k})=H^0(D_k,i_*\w_{D^k_\sm})$, whose restriction to each $D(\Delta_{IJ})$ is 
$$
w\res{D(\Delta_{IJ})} = \pm  \frac{1}{\Delta_{IJ}^{m-k}} 
\bigwedge_{x_{ij} \in \mathcal S_{IJ}} dx_{ij}.
$$
Moreover, $w$ generates $\w_{D^k}$.
\end{prop}

The sign of the above expression for $w\res{D(\Delta_{IJ})}$ depends on the position of the columns and rows appearing in $I$ and $J$ relative to the entire matrix, but will be unimportant for our purposes.

\begin{proof}
It is clear that if $w$ is indeed compatibly defined then it is a global generator of $\w_{D^k}$; this can be verified locally, and on each $D(\Delta_{IJ})$ it is immediate that $w$ is a unit times a generator of $\w\res{D(\Delta_{IJ})}$.

We thus just need to verify that the definitions on each $D(\Delta_{IJ})$ agree.
Because $D^k$ is irreducible, we may ignore the question of the sign: 
the rational $k(2m-k)$-form we defined on 
$D(\Delta_{[1,\dots,k|1,\dots,k]})$ 
will be defined on a dense open subset of each $D(\Delta_{IJ})$, and thus 
we just need to show that it extends to 
a regular
$k(2m-k)$-form
on $D(\Delta_{IJ})$ 
(which we will see will be of the form $\pm  \frac{1}{\Delta_{IJ}^{m-k}} \bigwedge_{x_{ij} \in \mathcal S_{IJ}} dx_{ij}$).
If it does, then this rational $k(2m-k)$-form defined on 
$D(\Delta_{[1,\dots,k|1,\dots,k]})$ 
extends to the entirety of each $D(\Delta_{IJ})$ and thus gives a regular $k(2m-k)$-form on $D^k$.

It suffices to show that the definitions on 
$D(\Delta_{[1,\dots,k|1,\dots,k]})$
 and 
$D(\Delta_{[1,\dots,i-1,i+1,\dots ,k, i'|1,\dots,k]})$ agree, i.e., that we can change one row; by symmetry we can then change one column as well, and by making one change at a time go from $D(\Delta_{[1,\dots,k|1,\dots,k]})$  to any $D(\Delta_{I',J'})$.
So, fix $I=J=\set{1,\dots,k}$ and $I'=\set{1,\dots,i-1,i+1,\dots,k, i'}$.

So, consider the rational $k(2m-k)$-forms 
$$
\bigwedge_{x_{ij}\in \mathcal S_{IJ} } dx_{ij}
\quad\text{and}\quad
\bigwedge_{x_{ij}\in \mathcal S_{I'J} } dx_{ij}.
$$
The first involves the variables occurring in the shaded region on the left below, the second involves those occurring in the shaded region on the right (where the darker region in each denotes the minor $\Delta$ being localized at):
$$
\let\small\relax
\left(\,
\begin{tikzpicture}[scale=1.4,baseline=-8.5mm]
\draw[thick] (0,0) -- (4,0);
\draw[thick] (1.2,1) -- (1.2,-2);
\draw[fill,color=gray,opacity=.2] (0,1) -- (4,1) -- (4,0) -- (1.2,0) -- (1.2,-2) -- (0,-2) -- (0,1);
\draw[fill,color=gray,opacity=.3] (0,1) -- (1.2,1) --   (1.2,0) -- (0,0) -- (0,1);
\node at (1.6,.5){$\small x_{i,k+1}$};
\node at (2.15,.5){$\small \cdots$};
\node at (2.65,.5){$\small x_{i,j}$};
\node at (3.15,.5){$\small \cdots$};
\node at (3.7,.5){$\small x_{i,m}$};
\draw[thick] (1.2,.65) -- (4,.65);
\draw[thick] (1.2,.35) -- (4,.35);
\end{tikzpicture}
\,
\right)
\quad\quad
\left(\,
\begin{tikzpicture}[scale=1.4,baseline=-8.5mm]
\draw[thick] (0,0) -- (4,0);
\draw[thick] (0,.65) -- (4,.65);
\draw[thick] (0,.35) -- (4,.35);
\draw[thick] (1.2,1) -- (1.2,-2);
\draw[thick] (0,-.85) -- (4,-.85);
\draw[thick] (0,-.55) -- (4,-.55);
\node at (1.6,-.7){$\small x_{i',k+1}$};
\node at (2.15,-.7){$\small \cdots$};
\node at (2.65,-.7){$\small x_{i',j}$};
\node at (3.15,-.7){$\small \cdots$};
\node at (3.7,-.7){$\small x_{i',m}$};

\draw[fill,color=gray,opacity=.2] (0,1) -- (4,1) -- (4,.65) -- (1.2,.65) -- (1.2,.35) -- (4,.35) -- (4,0) -- (1.2,0) -- (1.2,-.55)
--(4,-.55) -- (4,-.85) -- (1.2,-.85) -- (1.2,-2) -- (0,-2) -- (0,.35);
\draw[fill,color=gray,opacity=.3] (0,1) -- (1.2,1) --   (1.2,.65) -- (0,.65) -- (0,1);
\draw[fill,color=gray,opacity=.3] (0,.35) -- (1.2,.35) --   (1.2,0) -- (0,0) -- (0,.35);
\draw[fill,color=gray,opacity=.3] (0,-.55) -- (1.2,-.55) --   (1.2,-.85) -- (0,-.85) -- (0,-.55);
\end{tikzpicture}
\,
\right)
$$

To go from 
$\bigwedge_{x_{ij}\in \mathcal S_{IJ} } dx_{ij}$
to
$\bigwedge_{x_{ij}\in \mathcal S_{I'J} } dx_{ij}$ 
then, we need only replace the $m-k$ variables 
$x_{i,k+1},\dots,x_{i,m}$
by 
$x_{i',k+1},\dots,x_{i',m}$.
For each $j=k+1,\dots,m$, then, consider the $(k+1)\times (k+1)$ minor
$$
\begin{pmatrix}
x_{11} &\cdots &x_{1k} & x_{1j}\\
x_{21} &\cdots &x_{2k} & x_{2j}\\
\vdots & \ddots & \vdots & \vdots\\
x_{k1} &\cdots &x_{kk} & x_{kj}\\
x_{i'1} &\cdots &x_{i'k} & x_{i'j}
\end{pmatrix}.
$$
By Proposition~\ref{jacobian}, this yields the relation 
\begin{equation}
\label{xij}
\Delta_{[2,\dots,k,i'|2,\dots,k,j]} \, dx_{11}
 -
\dots+
\Delta_{[1,\dots,k|1,\dots,k]}\,dx_{i'j}=0
\end{equation}
on $\Omega^1_{D^k}$.
Now, we take the exterior product of this relation with
the $((k+1)^2-2)$-form
$$
\Lambda_j:=
\bigwedge_{(p,q)\in \set{1,\dots,k,i'}\times \set{1,\dots,k,j}\setminus \set{(i,j),(i',j)}} dx_{pq} ,
$$
i.e.,  the product over all the indices appearing in the minor \emph{except} $dx_{ij}$ and $dx_{i'j}$. 
We have highlighted in darker gray below the variables in $\Lambda_j$, in relation to each of the shaded regions in question:
$$
\let\small\relax
\left(\,
\begin{tikzpicture}[scale=1.4,baseline=-8.5mm]
\draw[thick] (0,0) -- (4,0);
\draw[thick] (1.2,1) -- (1.2,-2);
\draw[fill,color=gray,opacity=.2] (0,1) -- (4,1) -- (4,0) -- (1.2,0) -- (1.2,-2) -- (0,-2) -- (0,1);
\node at (1.6,.5){$\small x_{i,k+1}$};
\node at (2.15,.5){$\small \cdots$};
\node at (2.65,.5){$\small x_{i,j}$};
\node at (3.15,.5){$\small \cdots$};
\node at (3.7,.5){$\small x_{i,m}$};

\draw[thick] (1.2,.65) -- (4,.65);
\draw[thick] (1.2,.35) -- (4,.35);
\draw[fill,color=red,opacity=.3] (0,0) -- (1.2,0) -- (1.2,1) -- (0,1) -- (0,0);
\draw[fill,color=red,opacity=.3] (0,-.55) -- (1.2,-.55) -- (1.2,-.85) -- (0,-.85) -- (0,-.55);
\draw[fill,color=red,opacity=.3] (2.45,1)--(2.83,1)--(2.83,.65)--(2.45,.65)--(2.45,1);
\draw[fill,color=red,opacity=.3] (2.45,.35)--(2.83,.35)--(2.83,0)--(2.45,0)--(2.45,.35);
\end{tikzpicture}
\,
\right)
\quad\quad
\left(\,
\begin{tikzpicture}[scale=1.4,baseline=-8.5mm]
\draw[fill,color=red,opacity=.3] (0,0) -- (1.2,0) -- (1.2,1) -- (0,1) -- (0,0);
\draw[fill,color=red,opacity=.3] (0,-.55) -- (1.2,-.55) -- (1.2,-.85) -- (0,-.85) -- (0,-.55);
\draw[fill,color=red,opacity=.3] (2.45,1)--(2.83,1)--(2.83,.65)--(2.45,.65)--(2.45,1);
\draw[fill,color=red,opacity=.3] (2.45,.35)--(2.83,.35)--(2.83,0)--(2.45,0)--(2.45,.35);

\draw[thick] (0,0) -- (4,0);
\draw[thick] (0,.65) -- (4,.65);
\draw[thick] (0,.35) -- (4,.35);
\draw[thick] (1.2,1) -- (1.2,-2);
\draw[thick] (0,-.85) -- (4,-.85);
\draw[thick] (0,-.55) -- (4,-.55);

\node at (1.6,-.7){$\small x_{i',k+1}$};
\node at (2.15,-.7){$\small \cdots$};
\node at (2.65,-.7){$\small x_{i',j}$};
\node at (3.15,-.7){$\small \cdots$};
\node at (3.7,-.7){$\small x_{i',m}$};

\draw[fill,color=gray,opacity=.2] (0,1) -- (4,1) -- (4,.65) -- (1.2,.65) -- (1.2,.35) -- (4,.35) -- (4,0) -- (1.2,0) -- (1.2,-.55)
--(4,-.55) -- (4,-.85) -- (1.2,-.85) -- (1.2,-2) -- (0,-2) -- (0,.35);
\end{tikzpicture}
\,
\right)
$$

The only terms surviving on the left side of relation \eqref{xij} then are then the wedge product with these missing indices, so we have that
$$
\Lambda_j
\wedge \Bigl(
(-1)^{i+j}
\Delta_{[1,\dots,i-1,i+1,\dots,k,i'|1,\dots,k]}
 dx_{ij} + 
\Delta_{[1,\dots,k|1,\dots,k]}
dx_{i'j}\Bigr)=0,
$$
or equivalently
\begin{equation}
\label{Lj}
(-1)^{i+j+1}
\underbrace{\Delta_{[1,\dots,i-1,i+1,\dots,k,i'|1,\dots,k]}}_{\Delta_{I'J}}
\cdot
\Lambda_j \wedge 
 dx_{ij}
=
\underbrace{\Delta_{[1,\dots,k|1,\dots,k]}}_{\Delta_{IJ}}
\cdot
\Lambda _j\wedge 
dx_{i'j}.
\end{equation}
Note that the minors
$\Delta_{I'J}=\Delta_{[1,\dots,i-1,i+1,\dots,k,i'|1,\dots,k]}$
and
$\Delta_{IJ}=\Delta_{[1,\dots,k|1,\dots,k]}$
appearing on each side are independent of the column~$j$ under consideration.
 We have switched one $x_{ij}$ for $x_{i'j}$. 

Now, since any $\Lambda_j$ appears as a wedge factor of each of
$\bigwedge_{x_{pq} \in \mathcal S_{IJ}} dx_{pq} $ 
and 
$\bigwedge_{x_{pq} \in \mathcal S_{I'J}} dx_{pq} 
$, we can use the above relation for each $j=m-k+1,\dots,m$ to obtain
$$
\frac{1}{\Delta_{IJ}^{m-k}}
\bigwedge_{x_{pq} \in \mathcal S_{IJ}} dx_{pq}  = 
\pm 
\frac{1}{
\Delta_{I'J}^{m-k}
}
\bigwedge_{x_{pq} \in \mathcal S_{I'J}} dx_{pq} 
$$
(where the sign is determined by the $(m-k)$-fold product of $(-1)^{m+i}$ and the repeated use of skew-commutativity),
giving the result.
\end{proof}




We  now prove Theorem~\ref{nashideal} above, which states
that
the Nash ideal
$J(D^k)$ and $I_k^{m-k}$
have
the same integral closure.
The proof will occupy the rest of this section.

\begin{proof}

We have just seen that $\w_{D_k}\cong \O_{D^k}\<w\>$, with $w$ the $k(2m-k)$-form we defined in Proposition~\ref{canonicalform}. Since 
$\bigwedge^{k(2m-k)} \Omega_{D^k}$ is generated by the restriction of $k(2m-k)$-forms from $\A^{m^2}$, it suffices to consider how these forms restrict to $D^k$.

\begin{lem}
$\set{\Delta^{m-k}: \Delta \in I_k}\subset J(D^k)$.
\end{lem}

\begin{proof}
For any $k\times k$ minor  $\Delta=\Delta_{IJ}$,
consider the $k(2m-k)$-form 
$\rho:=\bigwedge_{x_{ij} \in \mathcal S_{IJ}} dx_{ij}$. By definition, on $D(X_{IJ})$ we have $\rho=\Delta_{IJ}^{m-k}\cdot w$.
Thus, we deduce that 
$$
\Delta_{IJ}^{m-k} \in J(D^k),
$$
giving the lemma.
\end{proof}

Recalling that for arbitrary elements $f_i$ of any ring $R$,  $( f_1^d,\dots,f_m^d )$ and $( f_1,\dots f_m)^d$ have the same integral closure, we obtain:

\begin{cor}\label{monom}
The integral closure of
$I_k^{m-k}$ is contained in the integral closure of $ J(D^k)$.
\end{cor}

Now, we need the reverse inclusion, for which it suffices to show that $J(D^k)$ is contained in $I_k^{m-k}$.

\begin{prop}
Let $\d = \bigwedge_{x_{ij}\in I, |I|=k(2m-k)} dx_{ij}$. Then the image of $\d $  in $\w_{D_k}$ is $F \cdot w$ for 
some
$F\in I_k^{m-k}$; in fact, $F $ is a degree-$(m-k)$ polynomial in the $k\times k$ minors.
\end{prop}

\begin{proof}
We think of the given 
set $I$ as corresponding to a filling of the $m\times m$-matrix by $k(2m-k)$ entries. We want to use the relations
of Corollary~\ref{jacobian} to move the filled entries to those corresponding to some $\mathcal S_{IJ}$. 
 For convenience's sake, we choose $I=J=\set{1,\dots,k}$; we write $\Delta=\Delta_{[1,\dots,k|1,\dots,k]}$. Let $(i,j)\in I$ be a ``filled'' entry with $i,j$ both $\geq k+1$. That is, $(i,j)$ lies in the ``bad'' region.

Consider the $(k+1)\times (k+1)$ minor formed by the first $k$ rows and columns and the $i$-th row and $j$-th column; in the following diagram this minor is marked in gray:
$$
\left(\,
\begin{tikzpicture}[,baseline=-6mm]
\draw[thick] (0,0) -- (4,0);
\draw[thick] (1.2,1) -- (1.2,-2);
\node at (2.6,-1.2){\small$\bullet$};
\node at (2.8,-1.6){\small$(i,j)$};

\draw[fill,color=gray,opacity=.3] (0,-1.35) -- (1.2,-1.35) -- (1.2,-1.05) -- (0,-1.05) -- (0,-1.05);
\draw[fill,color=gray,opacity=.3] (2.45,-1.35) -- (2.75,-1.35) -- (2.75,-1.05) -- (2.45,-1.05) -- (2.45,-1.35);
\draw[fill,color=gray,opacity=.3] (2.45,-1.35) -- (2.75,-1.35) -- (2.75,-1.05) -- (2.45,-1.05) -- (2.45,-1.35);
\draw[fill,color=gray,opacity=.3] (2.45,1) -- (2.75,1) -- (2.75,0) -- (2.45,0) -- (2.45,1);
\draw[fill,color=gray,opacity=.3] (0,0) -- (1.2,0) -- (1.2,1) -- (0,1) -- (0,0);
\end{tikzpicture}
\,
\right)
$$

 All entries of this minor except the $(i,j)$-th entry lie in the ``good'' region corresponding to $\mathcal S_{IJ}$. The relation from Proposition~\ref{jacobian} corresponding to this minor can be written as
$$
\Delta _{[1,\dots,k|1,\dots,k]}
\cdot
 dx_{ij} = 
-\sum_{(p,q)\neq (i,j)}
(-1)^{p+q}
\underbrace{\Delta_{[1,\dots,p-1,p+1,\dots,k,i | 1,\dots,q-1,q+1,\dots,k,j]}}_{\Delta_{pq}}
\cdot
d x_{pq}.
$$
The entries $(p,q)$ appearing on the right side are all ``good'', so we can localize at $ \Delta _{[1,\dots,k|1,\dots,k]}$ and
 use this equation to eliminate the ``bad'' entry $dx_{ij}$ in the $k(2m-k)$-form $\partial$ in favor of good entries (and this creates no new ``bad'' entries). Note that the coefficients we pick up are all of the form $\Delta_{KL}/\Delta$.

The goal now is to show that $F$ lies in $I_k^{m-k}$; in fact, we will show the stronger claim that it is a degree-$(m-k)$ polynomial in the $k\times k$ minors. 
i
We induce on the number of ``bad'' entries as follows:
Note that when we eliminate
$
 dx_{ij} 
$
from the $k(2m-k)$-form $\d$, we 
express $\d$ as a linear combination (with coefficients of the form $\Delta_i/\Delta$) of $k(2m-k)$-forms $\d_i$ with fewer ``bad'' entries. 
When we rewrite each of \emph{these} $k(2m-k)$-forms $\d_i$ as an element $F_i$ times $w$, by induction  we get $$\partial_i=F_i \w$$ for 
$F_i$
a degree-$(m-k)$ polynomial in the $k\times k$ minors (and thus in $I_k^{m-k}$).
Thus, we have 
$$
\Delta _{[1,\dots,k|1,\dots,k]} \cdot F=\sum \Delta_i F_i,
$$
or, collecting the terms on the right-hand side,
$$
\Delta _{[1,\dots,k|1,\dots,k]} \cdot F=G(\set{\Delta_{pq}}),
$$
where $G(\set{\Delta_{pq}})$ is a degree-$(m-k+1)$ polynomial in the $k\times k$-minors (and thus in $S_k\subset R_k$).

This equality implies that $F$ is homogeneous of degree $(m-k)k$; since  $G(\set{\Delta_{pq}})$ is a degree-$(m-k+1)$ polynomial in the $\Delta_{IJ}$, we can simply apply  Proposition~\ref{subalg} to conclude that $F\in S_k$ (i.e., $F$ is a degree-$(m-k)$ polynomial in the $\Delta_{IJ}$), and thus $F\in I_k^{m-k}$.
\end{proof}

Having just shown that $J(D^k)\subset I^k_{m-k}$, we have that $J(D^k)$ and $I^k_{m-k}$ have the same integral closure, concluding the proof of Theorem~\ref{nashideal}.
\end{proof}

\section{Computing minimal log discrepancies}
\label{mainthm}
For the remainder of the paper we work over a field of characteristic 0.
Our aim is to compute minimal log discrepancies on determinantal varieties via the
formula of Theorem~\ref{EMthm}.
Specifically, we consider the case of a pair $\bigl(D^k,\sum_{i=1}^k \a_iD^{k-i}\bigr)$, with $\a_i\in \R$ (possibly 0); our goal is to compute 
$$\mld(w;D^k,\sum \a_iD^{k-i})$$ for $w$ a closed point of $D^k$;
by the same process, we also will compute 
$$\mld(D^{k-j};D^k,\sum \a_iD^{k-i})$$ for any $j$.

Via the $(\GL_m\times \GL_m)_\infty$-action on $D^k$ we may assume that $w$ is the point
$$
x_q:=\underbrace{\left(\begin{matrix}
0 & \dots & 0 \\
\vdots & \ddots & \vdots \\
0 & \dots & 0 \\
0 & \dots & 0 \\
\vdots & \ddots & \vdots \\
0 & \dots & 0 
\end{matrix}\right.}_{m-q}
\ \
\underbrace{\left.\begin{matrix}
0 &\dots & 0\\
\vdots &\ddots & \vdots\\
0 &\dots & 0\\
1 &\dots & 0\\
\vdots &\ddots & \vdots\\
0 &\dots & 1
\end{matrix}\right)}_q
$$
for some $0\leq q\leq k$.

Note that the multicontact loci 
$$\Cont^i(J(D^k))\cap 
\Cont^{w_1}(D^{k-1})\cap 
\dots\cap
\Cont^{w_k}(D^{0})
$$
are $(\GL_m\times\GL_m)_\infty$-invariant, so  they decompose as disjoint unions of $(\GL_m\times\GL_m)_\infty$-orbits, say $\bigsqcup C_\l$.
Thus, we have that 
the multicontact loci 
$$
\Cont^i(J(D^k))\cap 
\Cont^{w_1}(D^{k-1})\cap 
\dots\cap
\Cont^{w_k}(D^0)
\cap 
\Cont^{\geq 1}(x_q)
$$
appearing in the calculation of the minimal log discrepancy $\mld(x_q;X,Y)$ via Theorem~\ref{EMthm} will decompose as 
 $$\bigsqcup \,( C_\l \cap \Cont^{\geq 1}(x_q)).
$$
(Note that 
$
\Cont^{\geq 1}(x_q)
$
is \emph{not}
$(\GL_m\times\GL_m)_\infty$-invariant, since $x_q$ is not $\GL_m\times \GL_m$-invariant.)

We now need to do the following:
\begin{itemize}
\item Analyze which of the
$ C_\l \cap \Cont^{\geq 1}(x_q) $ appear 
in a given 
multicontact locus.
\item Calculate the codimension
of $ C_\l \cap \Cont^{\geq 1}(x_q) $ in $D^k_\infty$.
\end{itemize}

To answer the former, we have the following:

\begin{prop}
\label{whichcyl}
Fix $q\leq k$ and 
let $\l=(\l_1,\dots,\l_m)$.
\begin{enumerate}
\item 
$C_\l \subset D^k_\infty$ if and only if $\l_1=\dots=\l_{m-k}=\infty$.
\item
\label{codimfib}
The codimension of $C_\l$ in $D^k_\infty$ is finite if and only if $\l_{m-k+1}<\infty$. 
\item \label{fiber} $C_\l \cap \Cont^{\geq 1}(x_q)\neq \emptyset$ if and only if $\l_{1},\dots,\l_{m-q}>0$ and $\l_{m-q+1}=\dots=\l_m=0$.
\item $C_\l\subset \Cont^{w_i}(D^{k-i})$ if and only if $\l_{m-k-i+1}+\dots+\l_m=w_i$. 
\item\label{nc} $C_\l\subset\Cont^i(J(D^k))$ if and only if $\l_{m-k+1}+\dots+\l_m=i/(m-k)$.
\end{enumerate}
\end{prop}

Note that (\ref{nc}) implies in particular that $\Cont^i(J(D^k))$ is empty if $m-k $ does not divide $i$.

\begin{proof}
(1), (\ref{codimfib}), and (4) are just
Propositions~3.2, 3.4, and 3.3 of
 \cite{Docampo}, respectively.

(\ref{fiber}) follows by noting that the matrix
$$
\delta_\l:=
\begin{pmatrix}
t^{\l_1} & & &\\
&t^{\l_2}& & &\\
&&\ddots\\
&&&t^{\l_m}
\end{pmatrix}
$$
(which generates the $(\GL_m\times \GL_m)_\infty$-orbit $C_\l$)
is mapped to $x_q$ under the map induced by the truncation $k[[t]]\to k$ if and only if the first $m-q$ entries are positive powers of $t$ and the rest are $1=t^0$.

Finally, to see (5), note that
by Lemma~\ref{intclos} and Theorem~\ref{nashideal} we have 
$$
\Cont^i(J(D^k))=\Cont^i(I_k^{m-k});
$$
since $\ord_\g(I_k^{m-k})=(m-k)\ord_\g(I_k)$, we have immediately that
$\Cont^i(I_k^{m-k})$ is empty if $m-k$ does not divide $i$, and is
$\Cont^{i/(m-k)}(I_k)$ when it does; we can then apply part (4) to obtain the desired conclusion.
\end{proof}

\begin{prop}
\label{codims}
\begin{enumerate}
\item 
If the conditions in statements (1)--(2) of Proposition~\ref{whichcyl} hold (so that $C_\l$ is in $D^k_\infty$  and has finite codimension), then
the
codimension of 
$C_\l$
in $D^k_\infty$
is
$$
(2(m-k+1)-1)\l_{m-k+1}+\cdots+(2m-1)\l_m.
$$
\item
If the conditions in statements (1)--(3) of Proposition~\ref{whichcyl} hold (so that $C_\l\cap \Cont^{\geq 1}(x_q)$ is in $D^k_\infty$, nonempty, and has finite codimension), then
the
codimension of 
$C_\l\cap\Cont^{\geq 1}(x_q)$
in $D^k_\infty$
is
$$
q(2m-q)+
(2(m-k+1)-1)\l_{m-k+1}+\cdots+(2m-1)\l_m.
$$
\end{enumerate}
\end{prop}

\begin{rem}

Note that since $\l_{m-q+1}=\dots=\l_m=0$ in part (2) of the theorem, we can just as well write the codimension of 
$C_\l\cap\Cont^{\geq 1}(x_q)$
in $D^k_\infty$
as
$$
q(2m-q)+
(2(m-k+1)-1)\l_{m-k+1}+\cdots+(2(m-q)-1)\l_{m-q}.
$$
\end{rem}

In what follows, we will write
$ G $
for $\GL_m\times\GL_m$
to lighten notation.
Our proof of the proposition is exactly parallel to the proof of Proposition~5.3 of \cite{Docampo}.

\begin{proof}[Proof of Proposition~\ref{codims}]
First, note that it suffices to prove (1), at which point (2) follows immediately:
the $G_\infty$-action on $D^k_\infty$ and 
the $G$-action on $D^k$ are compatible with the truncation morphisms $\psi_{\infty,\ell}$ and $\psi_{\ell,0}$, so we have a
commutative diagram
$$\begin{tikzcd}
G_\infty \times D^k_\infty\ar[d] \ar[r] & D^k_\infty\ar[d] \\
G \times D^k \ar[r] & D^k
\end{tikzcd}$$
Thus, we have that 
$\delta_\ell$ lies over $x_q$ 
if and only if 
$C_\l = G_\infty \cdot \delta_\ell$ lies over $G\cdot x_q$, and the fibers $C_\l\to g\cdot x_q$ are constant for $g\in G$. But note that $G\cdot x_q$ is the matrices of rank exactly $q$, and thus $\dim(G\cdot x_q)=q(2m-q)$.  Thus, if the codimension of $C_\l$ is $c$, say, then we must have that $\codim (C_\lambda \cap \Cont^{\geq 1}) =  \codim(C_\lambda) +q(2m-q)$, so that the formula in (1) implies (2).

By Proposition~\ref{codimlim},
it suffices to calculate
$(\ell+1)\cdot \dim X- \dim(\psi_{\infty,\ell}(C_\l))$ for $\ell \gg0$. 
As noted in Remark~\ref{truncate},
 the image of $C_\l$ under $\psi_{\infty,\ell}$ is exactly $C_{\bar\l,\ell}$, where $(\bar \l )_i = \min(\l_i,\ell)$.
We thus are led to calculating the dimensions of $C_{\bar \l,\ell}$ 
for $\ell \gg 0$. 
Choose $\ell > \l_{m-k+1}$ (by assumption $\l_{m-k+1}<\infty$).
To know $\dim C_{\bar \l,\ell}$ it suffices to know the codimension of the stabilizer of $\delta _{\bar \l,\ell}$ in $G_{\ell}$.


Consider the condition of an element
$$
\biggl(\Bigl(g_{ij}=\sum_{n=0}^\ell g_{ij}^n t^n\Bigr)_{i,j}, \Bigl(h_{ij}=\sum_{n=0}^\ell h_{ij}^n t^n\Bigr)_{i,j}\biggr) 
$$
 of $G_{\ell}$ stabilizing $\delta_{\bar \l,\ell}$,  which is the equality of matrices
$$
\displaylines{
\begin{pmatrix}
0& \cdots &0&   t^{\lambda_{m-k+1}}g_{1,m-k+1} &\dots &t^{\l_m} g_{1,m} \\
0& \cdots &0&   t^{\lambda_{m-k+1}}g_{2,m-k+1} &\dots &t^{\l_m} g_{2,m}\\
\vdots& \ddots &\vdots& \vdots & \ddots & \vdots& \\
0& \cdots &0&   t^{\lambda_{m-k+1}}g_{m,m-k+1} &\dots &t^{\l_m} g_{m,m}
\end{pmatrix}
\hfill\cr\hfill
=
\begin{pmatrix}
0&0&\cdots & 0\\
\vdots & \vdots & \ddots &\vdots\\
0&0&\cdots & 0\\
t^{\lambda_{m-k+1}}h_{m-k+1,1}&
t^{\lambda_{m-k+1}}h_{m-k+1,2}&\dots&
t^{\lambda_{m-k+1}}h_{m-k+1,m}
\\
\vdots &\ddots & \ddots &\vdots
\\
t^{\lambda_{m}}h_{m,1}&
t^{\lambda_{m}}h_{m,2}&\dots&
t^{\lambda_{m}}h_{m,m}
\end{pmatrix}.
}
$$

For $\max(i,j)< m-k+1$, 
equality of the $(i,j)$-th entries is trivial, since both entries are just 0.
If $i<m-k+1$ but $j\geq m-k+1$, 
equality of the $(i,j)$-th entries gives the equation
$$
t^{\l_j} g_{i,j} = 0,
$$
i.e., that
$$t^{\l_j} g_{i,j}^0 + t^{\l_j+1} g_{i,j}^1 +\dots+ t^\ell g_{i,j}^{\ell-\l_j}=0.$$
This gives $\ell-\l_j+1$ equations $g_{i,j}^n=0$ for $n=0,\dots,\ell-\l_j$.
Likewise, if $j<m-k+1$ but $i\geq m-k+1$ we get $\ell-\l_i+1$ equations $h_{i,j}^n=0$ for $n=0,\dots,\ell-\l_i$.

For $\min(i,j)\geq m-k+1$,
equality of the $(i,j)$-th entries gives the equation
$$
t^{\l_j} g_{i,j} = t^{\l_i} h_{i,j}.
$$
Say $i\leq j$, so $\l_i\geq \l_j$.
Writing out the condition above, we have
$$
t^{\l_j} g_{i,j}^0+t^{\l_j+1} g_{i,j}^1+\dots+t^\ell g_{i,j}^{\ell-\l_j}= 
0+\dots+0+
t^{\l_i} h_{i,j}^0+t^{\l_j+i} h_{i,j}^1+\dots+t^\ell h_{i,j}^{\ell-\l_i}.
$$
This gives $\ell-\l_j+1$ equations
\begin{enumerate}
\item $g_{i,j}^n=0$ for $n=0,\dots,\l_i-\l_j$.
\item $g_{i,j}^n=h_{i,j}^{n-\l_i+\l_j}$ for $n=\l_i-\l_j+1,\dots,\ell-\l_j$.
\end{enumerate}

For each of the $2k(m-k)+k^2$ indices $(i,j)$ with $\max(i,j)\geq m-k+1$, 
we thus obtain $$\ell+1-\min(\l_i,\l_j)$$ independent linear conditions.
To see how many entries contribute a given $\ell+1-\l_i$ linear conditions,
consider the filling of the matrix
where the $(i,j)$-th entry with $\max(i,j)\geq m-k+1$ is filled with
 $\min(\l_i,\l_j)$:
$$
\let\zero\relax
\let\zdots\relax
\begin{pmatrix}
\zero & \zero &\zdots & \zero & \l_{m-k+1} & \l_{m-k+2} & \cdots & \l_m \\
\zero & \zero& \zdots & \zero & \l_{m-k+1} & \l_{m-k+2} & \cdots & \l_m \\
\zdots  &\zdots & \zdots & \zdots & \vdots  & \vdots  & \ddots & \vdots\\
\zero & \zero & \zero & \zero & \l_{m-k+1} &  \l_{m-k+2}&\cdots & \l_m \\
\l_{m-k+1} &  \l_{m-k+1} & \cdots & \l_{m-k+1} & \l_{m-k+1} &\l_{m-k+2}& \cdots & \l_m \\
\l_{m-k+2} &  \l_{m-k+2} & \cdots & \l_{m-k+2} & \l_{m-k+2} &\l_{m-k+2}& \cdots & \l_m \\
\vdots  &\vdots & \ddots & \vdots & \vdots  & \vdots  & \ddots & \vdots\\
\l_m & \l_m & \cdots& \l_m & \l_m&\l_m&\cdots & \l_m
\end{pmatrix}.
$$
We see that there are $2(m-k+1)-1$ entries with $\l_{m-k+1}$, 
$2(m-k+2)-1$ entries with $\l_{m-k+1}$, 
and so on, up to $2m-1$ entries with $\l_m$.
This implies that the codimension of the stabilizer in $G_{\ell}$
is 
$$
(\ell+1)(2k(m-k)+k^2)
-\bigl((2(m-k+1)-1)\l_{m-k+1}+\dots+(2m-1)\l_m\bigr) ,
$$
which is thus the dimension of $C_{\bar \l,\ell}$.

Finally, this says that the codimension of 
$C_\lambda$
in $D^k_\infty$ is 
$$
\medmuskip3mu minus 1mu
k(2m-k)(\ell+1)-
\bigl(
(2k(m-k)+k^2)m^2(\ell+1)
-
(2(m-k+1)-1)\l_{m-k+1}+\cdots+(2m-1)\l_m
 \bigr),
$$
or 
$$
(2(m-k+1)-1)\l_{m-k+1}+\cdots+(2m-1)\l_m,
$$
giving the theorem.
\end{proof}



\begin{thm}
\label{mlds}
Consider the pair $\Bigl(D^k,\sum_{i=1}^k \a_i D^{k-i}\Bigr)$ (where the $\a_i$ may be zero). 
\begin{enumerate}
\item 
$\Bigl(D^k,\sum_{i=1}^k \a_i D^{k-i}\Bigr)$ is log canonical at a matrix $x_q$ of rank $q\leq k$ exactly when 
$$
\a_1+\dots+\a_j \leq m-k+(2j-1)
$$
for all $j=1,\dots,k-q$.
\item In this case, 
$$
\mld\biggl(x_q; D^k,\sum_{i=1}^k \a_i D^{k-i}\biggr)= q(m-k)+km-
\sum_{i=1}^{k-q} (k-q-i+1)\,\a_i.
$$
\item 
$\Bigl(D^k,\sum_{i=1}^k \a_i D^{k-i}\Bigr)$ is log canonical along $D^{k-j}$ (for $j>0$) exactly when 
$$
\a_1+\dots+\a_j \leq m-k+(2j-1)
$$
for all $j=1,\dots,k$.
\item In this case, 
$$
\mld\biggl(D^{k-j}; D^k,\sum_{i=1}^k \a_i D^{k-i}\biggr)= 
j(m-k+j)-\sum_{i=1}^j (j-i+1)\a_i
$$
\end{enumerate}
\end{thm}

Before proving the theorem, we mention a few corollaries:

\begin{cor}[semicontinuity]
If $\a_1,\dots,\a_k$ are nonnegative real numbers,
the function
$w\mapsto \mld\bigl(w;D^k,\sum_{i=1}^k \a_i D^{k-i}\bigr)$ is lower-semicontinuous on closed points.
\end{cor}

\begin{proof}
The quantity 
$$
\let\big\Big
\mld\bigl(w;D^k,\sum \a_i D^{k-i}\bigr)
$$
is constant on each locus of rank-$q$ matrices, so we only need to check that it decreases when we go from $q$ to $q-1$.
Note that part (1) of the theorem guarantees that if 
$\mld\bigl(x_q;D^k,\sum \a_i D^{k-i}\bigr)$ is $-\infty$ then the same is true of
$\mld\bigl(x_{q-1};D^k,\sum \a_i D^{k-i}\bigr)$, so we may 
assume  that both
$\mld\bigl(x_q;D^k,\sum \a_i D^{k-i}\bigr)$ and 
$\mld\bigl(x_{q-1};D^k,\sum \a_i D^{k-i}\bigr)$ are nonnegative, and thus we may apply the formula in part (2) of the theorem.

This formula
implies that
$$
\let\big\Big
\mld\bigl(x_q; D^k,\sum \a_i D^{k-i}\bigr)
-
\mld\bigl(x_{q-1}; D^k,\sum \a_i D^{k-i}\bigr)
=(m-k)+\a_1+\dots+\a_{k-q+1}>0,
$$
yielding the result.
\end{proof}

\begin{cor}
Determinantal varieties (of square matrices) have terminal singularities.
\end{cor}

This follows easily from the fact determinantal varieties have a small resolution (see, e.g., \cite[Example~16.18]{Harris}), but this gives a proof avoiding the use of an explicit resolution. It also gives explicitly the log discrepancy along the singular locus. 

\begin{proof}
We consider just the singularities of $D^k$, i.e., all $\a_i$ are 0. Since $D^m\cong \A^{m^2}$, we may assume $k<m$. 
Recall from Definition~\ref{whatisterm} it suffices to show that
$$
\mld(D^{k-1},D^k)>1.
$$

By part (3) of Theorem~\ref{mlds}, this is
$$
m-k+1,
$$
and this is $>1$ except in the excluded case $k=m$.
In particular, determinantal varieties of square matrices have terminal singularities.
\end{proof}


Now, we prove the theorem itself:

\begin{proof}[Proof of Theorem~\ref{mlds}]
We begin by proving parts (1) and (2):
By Proposition~\ref{whichcyl}, 
we can decompose the multicontact loci 
$$
\mathcal C_{n,w_1,\dots,w_k}:=
\Cont^n(J(D^{k}))\cap \Cont^{w_1}(D^{k-1})\cap\dots\cap\Cont^{w_k}(D^0)\cap \Cont^{\geq 1}(x_q)
$$
as the disjoint union of
$$
C_\l \cap \Cont^{\geq 1}(x_q),
$$
with $\l=(\l_1,\dots,\l_m)$ ranging over all $m$-tuples satisfying:
\begin{itemize}
\item $\l_1=\dots=\l_{m-k}=\infty$.
\item $\l_{m-k+1}<\infty$.
\item $\l_{m-q}>0$ (and thus $\l_{m-k+1},\dots,\l_{m-q}$ are all $>0$) and $\l_{m-q+1}=\dots=\l_m=0$.
\end{itemize}
Again by Proposition~\ref{whichcyl}, it's immediate that a cylinder
$
C_\l \cap \Cont^{\geq 1}(x_q)
$
will 
lie in
$$\Cont^{\l_{m-k}+\dots+\l_{m-q}}(I_k)=\Cont^{(m-k)(\l_{m-k}+\dots+\l_{m-q})}(J(D^k))$$
and in 
$$
\Cont^{\l_{m-k-j+1}+\dots+\l_{m-q}}(D^{k-j})
$$
for each $i$. 

Equivalently, a given cylinder 
$ C_\l \cap \Cont^{\geq 1}(x_q)$
 is contained in $\mathcal C_{n,w_1,\dots,w_k}$ for
$$n=(m-k)(\l_{m-k+1}+\dots+\l_{m-q})$$
and
$$w_i=\l_{m-k-i+1}+\dots+\l_{m-q}.$$

Finally, by part (2) of Proposition~\ref{codims}, we know that 
$$
\codim (C_\l \cap x_q)=
q(2m-q)+
(2(m-k+1)-1)\l_{m-k+1}+\cdots+(2(m-q)-1)\l_{m-q}.
$$

The infimum in Theorem~\ref{EMthm} can then be rewritten as
$$
\displaylines{
q(2m-q)+
(2(m-k+1)-1)\l_{m-k+1}+\cdots+(2(m-q)-1)\l_{m-q}
-(m-k)(\l_{m-k+1}+\dots+\l_{m-q})
\hfill\cr\hfill
-\a_1(\l_{m-k+1}+\dots+\l_{m-q})
-\a_2(\l_{m-k+2}+\dots+\l_{m-q})
-\dots-
\a_{k-q }(\l_{m-q})
}
$$
over $\l_{m-k+1},\dots,\l_{m-q}>0$.

Grouping terms by the $\l_i$, we can rewrite this quantity as
$$
\displaylines{
q(2m-q)+
\l_{m-k+1}(m-k+1-\a_1)+
\l_{m-k+2}(m-k+3-(\a_1+\a_2))+
\hfill\cr\hfill
+\dots
+
\l_{m-q}(m-k+(2(k-q)-1)-(\a_1+\dots+\a_{k-q}))
.}
$$
Now, set 
$$
\eqalign{
\b_1&=m-k+1-\a_1,
\cr&\vdots\cr
\b_{k-q}&=
m-k+(2(k-q)-1)-(\a_1+\dots+\a_{k-q})
,}$$
so $\b_i$ is the coefficient of $\l_{m-k+i}$ in the above quantity.
It is clear that if any $\b_i$ is negative then simply by taking $\l_{m-k+i}\gg0$ we can make the quantity in question arbitrarily negative, and thus $(D^k,\sum \a_i D^{k-i})$ will not be log canonical, proving part (1) of the theorem.

If all $\b_i$ are nonnegative, then it is clear that the quantity
$$
q(2m-q)+\l_{m-k+1}\b_1+\dots+
\l_{m-q}
\b_{k-q} 
$$
is  minimized by taking $\l_{m-k+1}=\dots=\l_{m-q}=1$. 
Taking these values and simplifying, we see that the minimum value is 
$$
q(m-k)+km-\a_1(k-q)-\a_2(k-q-1)-\dots-2\a_{k-q-1}-\a_{k-q},
$$
giving the claim in (2).

The proof of (3) and (4) follows in exactly the same fashion, except that one imposes the condition that $\l_{m-k+1},\dots,\l_{m-k+j}>0$ instead of the conditions that $\l_{m-k+1},\dots,\l_{m-q}>0$ and $\l_{m-q+1}=\dots=\l_m=0$,
 and  uses the formula from part (1) of Proposition~\ref{codims} instead of part~(2).
\end{proof}


\bibliographystyle{alpha}
\bibliography{link}
\end{document}